\newcommand{\ignore}[1]{}{}
\newcommand{\bea}{\begin{eqnarray}}
  \newcommand{\ena}{\end{eqnarray}}
\newcommand{\J}{{\cal J}}
\newcommand{\beas}{\begin{eqnarray*}}
\newcommand{\enas}{\end{eqnarray*}}
\newcommand{\p}{\tilde{p}}
\newcommand{\be}{\begin{equation}}               
\newcommand{\ee}{\end{equation}}                 
\newcommand{\bi}{\begin{itemize}}
\newcommand{\ei}{\end{itemize}}
\newcommand{\beqn}{\begin{eqnarray}}             
\newcommand{\eeqn}{\end{eqnarray}}               
\newcommand{\beq}{\begin{eqnarray*}}             
\newcommand{\eeq}{\end{eqnarray*}}               
\newcommand{\nn}{\nonumber}
\newcommand{\lbl}{\label}
\newcommand{\eq}[1]{$(\ref{#1})$}
\newcommand{\ssb}{\scriptstyle \footnotesize 
                 \begin{array}{c}}
\newcommand{\esb}{\end{array}}
\begin{document}
\renewcommand{\baselinestretch}{1.2}
\markright{
}
\markboth{\hfill{\footnotesize\rm LOUIS H.Y. CHEN, XIAO FANG AND QI-MAN SHAO
}\hfill}
{\hfill {\footnotesize\rm MODERATE DEVIATIONS IN POISSON APPROXIMATION} \hfill}
\renewcommand{\thefootnote}{}
$\ $\par
\fontsize{10.95}{14pt plus.8pt minus .6pt}\selectfont
\vspace{0.8pc}
\centerline{\large\bf MODERATE DEVIATIONS IN POISSON APPROXIMATION:}
\centerline{\large\bf A FIRST ATTEMPT}
\vspace{.4cm}
\centerline{Louis H. Y. Chen, Xiao Fang and Qi-Man Shao}
\vspace{.4cm}
\centerline{\it National University of Singapore,}
\centerline{\it National University of Singapore and Stanford University,}
\centerline{\it and Chinese University of Hong Kong}
\vspace{.55cm}
\fontsize{9}{11.5pt plus.8pt minus .6pt}\selectfont

\begin{quotation}
\noindent {\it Abstract:}
Poisson approximation using Stein's method has been extensively studied in the literature. The main focus has been on bounding the total variation distance.  This paper is a first attempt on moderate deviations in Poisson approximation for right-tail probabilities of sums of dependent indicators. We obtain results under certain general conditions for local dependence as well as for size-bias coupling. These results are then applied to independent indicators, 2-runs, and the matching problem.

\par

\vspace{9pt}
\noindent {\it Key words and phrases:}
Stein's method, moderate deviations, Poisson approximation, local dependence, size-bias coupling.
\par
\end{quotation}\par

\newcommand{\Var}{\mbox{Var}}
\renewcommand{\theequation}{\arabic{chapter}.\arabic{equation}}
\newtheorem{definition}{{\bf Definition}}[chapter]
\newtheorem{theorem}{{\bf Theorem}}[chapter]
\newtheorem{prop}{ {\bf Proposition}}[chapter]
\newtheorem{lemma}{{\bf Lemma}}[chapter]
\newtheorem{coro}{{\bf Corollary}}[chapter]
\newtheorem{remark}{{\bf Remark}}[chapter]
\newtheorem{exam}{{\bf Example}}[chapter]

\newcommand{\Proof}{ \noindent {\bf Proof.}\ }

\fontsize{10.95}{14pt plus.8pt minus .6pt}\selectfont
\setcounter{chapter}{1}
\setcounter{equation}{0} 
\noindent {\bf 1. Introduction}

Poisson approximation using Stein's method has been applied to many areas, ranging from computer science to computational biology. The main focus has been on bounding the total variation distance between the distribution of a sum of dependent indicators and the Poisson distribution with the same mean.

Broadly speaking, there are two main approaches to Poisson approximation, the local approach and the size-bias coupling approach.  The local approach was first studied by Chen (1975) and developed further by Arratia, Goldstein and Gordon (1989, 1990), who presented Chen's results in a form which is easy to use, and applied them to a wide range of problems including problems in extreme values, random graphs and molecular biology.  The size-bias coupling approach dates back to Barbour (1982) in his work on Poisson approximation for random graphs. Barbour, Holst and Janson (1992) presented a systematic development of monotone couplings and applied their results to random graphs and many combinatorial problems. A recent review of Poisson approximation by Chatterjee, Diaconis and Meckes (2005) used Stein's method of exchangeable pairs to study classical problems in combinatorial probability. They also reviewed a size-bias coupling of Stein (1986, p. 93).

Although there is a vast literature on Poisson approximation, relatively little has been done on such refinements as moderate deviations.  For sums of independent indicators, moderate deviations have been studied by Barbour, Holst and Janson (1992), Chen and Choi (1992), and Barbour, Chen and Choi (1995).  The latter two actually considered the more general problem of unbounded function approximation and deduced moderate deviations as a special case.  However no such results seem to have been obtained for dependent indicators, probably due to the fact that unbounded function approximation becomes much harder for dependent indicators. Although moderate deviations is a special case of unbounded function approximation, it is of a similar nature as the latter and, as such, it is also a difficult problem for dependent indicators.

This paper is a first attempt on moderate deviations in Poisson approximation for dependent indicators.  We take both the local and the size-bias coupling approach.  Under the local approach we consider locally dependent indicators.  Under the size-bias coupling approach we consider size-bias coupling, which generalizes the monotone couplings of Barbour, Holst and Janson (1992) and the size-bias coupling of Stein (1986).  In both approaches, we consider moderate deviations for right-tail probabilities under certain general conditions.

This paper is organized as follows.  Section 2 contains the main theorems. In Section 3, we apply our main theorems to Poisson-binomial trials, $2$-runs in a sequence of i.i.d. Bernoulli random variables, and the matching problem. As far as we know, the results for the last two applications are new. In Section 4 we prove the main theorems.

\par

\setcounter{chapter}{2}
\setcounter{equation}{0} 

\bigskip

\noindent {\bf 2. Main Theorems}

In this section, we state two general theorems on moderate deviations in Poisson approximation, one under local dependence and the other under size-bias coupling. Let $|\cdot|$ denote the Euclidean norm or cardinality.

\noindent {\bf 2.1 Local dependence}

Local dependence is a widely used dependence structure for Poisson approximation.
We refer to Arratia, Goldstein and Gordon (1989, 1990) for results on the total variation distance and applications.
Here we prove a moderate deviation result.
Let $X_i, i \in \J
$, be random indicators indexed by
$\J$. Let  $W= \sum_{i \in \J} X_i$,
\begin{equation}
 p_i = P(X_i =1), \ \ \mbox{and} \ \   \lambda = \sum_{i \in \J} p_i>0 \ . \label{0}
\end{equation}
Suppose for each $i\in \J$, there exists a subset $B_i$ of $\J$ such that $X_i$ is independent of $\{X_j: j\notin B_i\}$. The subset $B_i$ is called a dependence neighborhood of $X_i$.
Assume that
\begin{equation}
\max_{i \in \J} |B_i| \leq m,\quad  \max_{j\in \J} |\{i: j\in B_i\}|\leq m
\lbl{m},
\end{equation}
and, for some $\delta, \theta>0$,
\begin{equation}
E (\sum_{i\in \J} \sum_{j\in B_i\backslash \{i\}} X_i X_j |W=w)\leq \delta w^2\  \text{for}\  w\leq \theta.
\lbl{delta}
\end{equation}
Let $\p = \max_{i \in \J} p_i$.

\begin{theorem}\lbl{LDt}
Let $W=\sum_{i\in \J} X_i$ be a sum of locally dependent random indicators with dependence neighborhoods $B_i$ satisfying \eq{m} and \eq{delta}. Then there exist absolute positive constants $c, C$ such that for $k\geq \lambda$ satisfying
 \beq
k\leq \theta/Cm,\quad \tilde{p} (1+\xi^2) +\delta \lambda (1+\xi^2 +\frac{\xi^3}{\sqrt{\lambda}})
\leq c/m^2
 \eeq
where $\xi=(k-\lambda)/\sqrt{\lambda}$, we have
 \begin{equation}
 \begin{split}
 &\Bigl| \frac{P(W\geq k)}{P(Y\geq k)}-1 \Bigr| \\
 &\leq Cm^2\Big\{  \tilde{p} (1+\xi^2) +\delta \lambda (1+\xi^2 +\frac{\xi^3}{\sqrt{\lambda}}) \Big\} + C (1\wedge \frac{1}{\lambda})m^2 \exp(- \frac{c \theta}{m})
 \end{split}
 \end{equation}
where $Y\sim Poi(\lambda)$.
\end{theorem}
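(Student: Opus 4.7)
The plan is to apply Stein's method for Poisson approximation using the solution $f_k$ of
\begin{equation*}
\lambda f_k(w+1) - w f_k(w) = \mathbf{1}_{w \geq k} - P(Y \geq k),
\end{equation*}
so that
\begin{equation*}
P(W \geq k) - P(Y \geq k) = \sum_{i \in \J}\bigl(p_i E[f_k(W+1)] - E[X_i f_k(W)]\bigr).
\end{equation*}
For each $i$, I set $V_i = \sum_{j \in B_i\setminus\{i\}} X_j$ and $U_i = W - \sum_{j \in B_i} X_j$, so that $W = X_i + V_i + U_i$ and $X_i$ is independent of $U_i$ by local dependence. Using $X_i\in\{0,1\}$ and a first-order expansion, the $i$th summand equals
\begin{equation*}
E[(p_i - X_i)f_k(1+V_i+U_i)] + p_i E[X_i \Delta f_k(1+V_i+U_i)],
\end{equation*}
where $\Delta f_k(w) = f_k(w+1)-f_k(w)$. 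Telescoping $f_k(1+V_i+U_i) = f_k(1+U_i) + \sum_{j=0}^{V_i-1}\Delta f_k(1+j+U_i)$ and using $X_i \perp U_i$ together with $E[X_i - p_i]=0$ kills the $f_k(1+U_i)$ piece, leaving only expressions in the increments $\Delta f_k$.

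After summing over $i$, the surviving contribution is, up to sign, the cross-moment $\sum_i E[\sum_{j\in B_i\setminus\{i\}} X_i X_j\,\Delta f_k(\cdots)]$ — precisely the object controlled by \eq{delta} once one conditions on $W=w$ and restricts to $w\leq\theta$ — together with a $\sum_i p_i E[V_i]\leq m\tilde p\lambda$ contribution from replacing $X_i$ by $p_i$ in the $V_i$-sum, and a $\sum_i p_i^2 \leq \tilde p\lambda$ contribution from $p_i E[X_i\Delta f_k]$. The problem therefore reduces to integrating suitable pointwise bounds on $|\Delta f_k|$ and $|f_k|$ against the conditional law of $W$.

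The technical heart of the proof is to obtain sharp bounds on $|\Delta f_k(w)|$ and $|f_k(w)|$ for $w$ in the moderate range. From the explicit formula
\begin{equation*}
f_k(w) = -(w-1)!\,\lambda^{-w}\,e^{\lambda}\,P(Y<k)\,P(Y\geq w)\qquad (w>k),
\end{equation*}
and its analogue for $w\leq k$, one derives estimates of the form $|\Delta f_k(w)| \leq (C/\lambda)\,P(Y\geq k\vee w)/P(Y \geq k)$ using Poisson tail-ratio identities and Stirling asymptotics. Integrating such a pointwise bound against the distribution of $W$ on $\{W\leq\theta\}$, combining it with the moderate-deviation asymptotic $P(Y\geq k)\sim (\xi\sqrt{2\pi})^{-1}\exp(-\xi^2/2)$, and appealing to \eq{delta} to evaluate the quadratic cross-moment, produces exactly the polynomial factors $(1+\xi^2)$ and $\xi^3/\sqrt{\lambda}$ in the statement.

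For the complementary event $\{W>\theta\}$, a Chernoff-type inequality for the locally dependent sum $W$ under \eq{m} yields $P(W>\theta)\leq \exp(-c\theta/m)$; combined with a trivial uniform bound on $|f_k|$ of order $C(1\wedge 1/\lambda)/P(Y\geq k)$, this produces the additive $C(1\wedge 1/\lambda)m^2\exp(-c\theta/m)$ correction. Dividing through by $P(Y\geq k)$ converts the additive bound into the stated relative one, and the hypothesis $k \leq \theta/(Cm)$ is what ensures that the relevant range of $w$ stays within the regime where \eq{delta} applies. The main obstacle will be the Stein-solution step: the usual uniform estimate $\|\Delta f_k\|_\infty\leq 1/\lambda$ is far too crude for the moderate regime, and recovering the precise polynomial $(1+\xi^2+\xi^3/\sqrt\lambda)$ requires exploiting both the rapid decay of Poisson tail probabilities away from $k$ and the $w$-conditional moment bound \eq{delta} in tandem — this careful interplay of tail asymptotics with conditional moment control is where the genuine novelty of the argument lies.
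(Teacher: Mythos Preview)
Your high-level architecture matches the paper's: the same Stein equation, the same splitting of each summand via $U_i=\sum_{j\notin B_i}X_j$ and a telescoping over $B_i$, the same use of \eq{delta} on $\{W\le\theta\}$ and a Bennett--Hoeffding tail on $\{W>\theta\}$. But there is a genuine gap in the core step.

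The pointwise bound you propose, $|\Delta f_k(w)|\le (C/\lambda)\,P(Y\ge k\vee w)/P(Y\ge k)$, is too crude in the crucial regime $\lambda<w<k$: there it collapses to the uniform bound $C/\lambda$, which you yourself note is inadequate. What the paper actually uses for $w<k$ is $f_k(w)-f_k(w+1)=P(Y\ge k)\,g_1(w)$ with $g_1(w)=\int_0^\infty x(1+x)^{w-1}e^{-\lambda x}\,dx$, a function that is nonnegative, increasing in $w$, and satisfies $g_1(w)\le \lambda^{-1}+(w-1)!(w-\lambda)_+e^{\lambda}\lambda^{-w-1}$. This second term blows up like $1/P(Y=w-1)$ as $w$ grows, and is what ultimately produces the $\xi^2$ and $\xi^3/\sqrt\lambda$ factors.

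More seriously, you are missing the bootstrap that closes the argument. After telescoping you face expectations of the form $E[W^q g_1(W\wedge(k-1))]$ for $q=0,1,2$, and these cannot be evaluated directly: they involve the unknown distribution of $W$ weighted by a function that grows like $1/P(Y=W)$. The paper handles this by introducing $\eta_k:=\sup_{\lambda\le r\le k}P(W\ge r)/P(Y\ge r)$, proving the transfer lemma $E g(W\wedge k)\le C(\eta_k+1)E g(Y\wedge k)$ for nonnegative nondecreasing $g$, and then computing $E[(Y\wedge k)^q g_1(Y\wedge k)]$ explicitly. This yields an inequality of the form
\[
\Bigl|\frac{P(W\ge k)}{P(Y\ge k)}-1\Bigr|\le C(\eta_k+1)\,m^2\Bigl\{\tilde p(1+\xi^2)+\delta\lambda(1+\xi^2+\xi^3/\sqrt\lambda)\Bigr\}+\text{(tail)},
\]
whose right side is increasing in $k$; taking the supremum over $r\in[\lambda,k]$ turns this into a self-referential inequality for $\eta_k$ that can be solved under the smallness hypothesis. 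Your proposal integrates ``against the conditional law of $W$'' as if that law were already under control, and invokes the normal-type asymptotic $P(Y\ge k)\sim(\xi\sqrt{2\pi})^{-1}e^{-\xi^2/2}$ (which in any case fails once $\xi^3/\sqrt\lambda$ is of order one). Without the $\eta_k$ recursion, the argument does not close.
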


\begin{remark}
{\rm The main difficulty in applying Theorem \ref{LDt} is to verify the condition \eq{delta}. Intuitively, if for many $i\in \J, j\in B_i\backslash \{i\}$, $p_{ji}:=P(X_j=1|X_i=1)$ is large, then given $W=w$, the $w$ $1$'s tend to appear in clusters, which makes the left-hand side of \eq{delta} large (bounded by $w^2$ in the extreme case). If $p_{ji}$ is small, then the $w$ $1$'s tend to be distributed widely, making the left-hand side of \eq{delta} small ($0$ in the extreme case). It is a challenge to replace the $\delta$ in \eq{delta} by a quantity involving only $\{p_i, p_{ji}: i\in \J, j\in B_i\backslash \{i\}\}$.}
\end{remark}

\bigskip

\noindent {\bf 2.2 Size-bias coupling}

Baldi, Rinott and Stein (1989) and Goldstein and Rinott (1996) used size-bias coupling to prove normal approximation results by Stein's method. In the context of Stein's method for Poisson approximation, size-bias coupling was used implicitly by Stein (1986, page 93), Barbour (1982), Barbour, Holst and Janson (1992, page 23) and Chatterjee, Diaconis and Meckes (2005, page 93). The following definition of size-bias distribution can be found in Goldstein and Rinott (1996).
\begin{definition}
For $W$ a non-negative random variable, $W^s$ has a $W$-size biased distribution if
\begin{equation}
E W f(W) =\lambda E f(W^s)
\lbl{2}
\end{equation}
for all functions $f$ such that the expectations exist.
\end{definition}
We take $W$ to be a non-negative integer-valued random variable, in particular, a sum of random indicators. If we can couple $W$ with $W^s$ on the same probability space, then we have a bound on the total variation distance between $\mathcal{L}(W)$ and a Poisson distribution.
\begin{theorem}\label{t1}
Let $W$ be a non-negative integer-valued random variable with $E W=\lambda >0$. If $W^s$ is defined on the same probability space as $W$ with a $W$-size biased distribution, then 
\begin{equation}
\| {\cal L}(W) - Poi(\lambda)\|_{TV}
\leq (1- e^{-\lambda}) E|W+1 - W^s|.
\lbl{3}
\end{equation}
\end{theorem}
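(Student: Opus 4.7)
The plan is to prove this via Stein's method for the Poisson distribution, exploiting the size-bias identity as the replacement for the characterizing equation. For a subset $A\subset\mathbb{Z}_{\geq 0}$, let $f_A$ denote the (bounded) solution of the Stein equation
\begin{equation*}
\lambda f(w+1) - w f(w) = \mathbf{1}_A(w) - Poi(\lambda)(A), \qquad w\in\mathbb{Z}_{\geq 0},
\end{equation*}
so that $P(W\in A) - Poi(\lambda)(A) = E\big[\lambda f_A(W+1) - W f_A(W)\big].$ Taking $\sup_A$ of the absolute value of the left-hand side yields the total variation distance, so our job is to bound the right-hand side uniformly in $A$.

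Next I would invoke the size-bias identity \eq{2} with $f = f_A$, which gives $E[W f_A(W)] = \lambda E[f_A(W^s)]$. Substituting,
\begin{equation*}
P(W\in A) - Poi(\lambda)(A) = \lambda E\big[f_A(W+1) - f_A(W^s)\big].
\end{equation*}
Since $W$ and $W^s$ are both integer-valued and coupled on the same space, I can write the right-hand side as a telescoping sum in steps of $\Delta f_A(j) := f_A(j+1)-f_A(j)$, giving the pointwise bound $|f_A(W+1) - f_A(W^s)| \leq \|\Delta f_A\|_\infty \cdot |W+1-W^s|.$

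The key input is then the classical Stein factor bound
\begin{equation*}
\sup_{A\subset \mathbb{Z}_{\geq 0}} \|\Delta f_A\|_\infty \leq \frac{1-e^{-\lambda}}{\lambda},
\end{equation*}
which is proved by writing the explicit representation of $f_A$ in terms of the Poisson distribution function and maximizing; I would simply cite Barbour, Holst and Janson (1992) for this sharp constant. Combining the three ingredients,
\begin{equation*}
|P(W\in A) - Poi(\lambda)(A)| \leq \lambda \cdot \frac{1-e^{-\lambda}}{\lambda}\cdot E|W+1-W^s| = (1-e^{-\lambda})\, E|W+1-W^s|,
\end{equation*}
and taking the supremum over $A$ yields \eq{3}. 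The only nontrivial ingredient is the Stein factor bound; every other step is a direct algebraic manipulation of the size-bias identity, so I do not expect a genuine obstacle beyond quoting that lemma correctly.
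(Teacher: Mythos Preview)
Your proposal is correct and follows essentially the same route as the paper: solve the Stein equation for indicator test functions, apply the size-bias identity to rewrite $E[\lambda f_A(W+1)-Wf_A(W)]$ as $\lambda E[f_A(W+1)-f_A(W^s)]$, telescope, and invoke the Barbour--Holst--Janson bound $\|\Delta f_A\|_\infty\le(1-e^{-\lambda})/\lambda$. The only detail the paper makes explicit that you leave implicit is that the telescoping step uses $W^s\ge 1$ (immediate from the size-bias definition for non-negative integer $W$), since the Stein factor bound is stated for increments at $j\ge 1$.
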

\begin{proof}
Let $h(w)=I(w\in A)$ for $w\in \mathbb{Z}_+$, where $A$ is any given subset of $\mathbb{Z}_+$. Let $f_h$ be the bounded solution (unique except at $w=0$) to the Stein equation
\begin{equation}
 \lambda f(w+1) - w f(w) = h(w) - E h(Y) \lbl{cs}
\end{equation}
where $Y\sim Poi(\lambda)$. It is known that (see, for example,  Barbour, Holst and Janson (1992, page 7))
\begin{equation}\label{t1-1}
\Delta f_h:=\sup_{j\in \mathbb{Z}_+, j\geq 1} |f_h(j+1)-f_h(j)|\leq \lambda^{-1}(1-e^{-\lambda}).
\end{equation}
From \eq{cs} and the fact that $W^s$ is coupled with $W$ and has the $W$-size biased distribution, we have
\begin{equation*}
\begin{split}
|P(W\in A)-P(Y\in A)|& = |\lambda E f_h(W+1) - E W f_h(W)|\\
& = \lambda |E (f_h(W+1)-f_h(W^s))|\\
&\leq \lambda \Delta f_h E |W+1-W^s|\\
&\leq (1-e^{-\lambda})E|W+1-W^s|,
\end{split}
\end{equation*}
where the first inequality is obtained by writing $f_h(W+1)-f_h(W^s)$ as a telescoping sum and using the definition of $\Delta f_h$, along with the fact that $W^s\geq 1$. The second inequality follows from \eq{t1-1}.
Taking supremum over $A$ yields \eq{3}.
\end{proof}
Similar results as Theorem \ref{t1} can be found in Barbour, Holst and Janson (1992) and Chatterjee, Diaconis and Meckes (2005).
In order for the bound \eq{3} to be useful, we need to couple $W$ with $W^s$ such that $E|W+1-W^s|$ is small. A general way of constructing such size-bias couplings for sums of random indicators is as follows; see, for example, Goldstein and Rinott (1996). Let $\mathbb{X}=\{X_i\}_{i \in \J
}$ be $\{0, 1 \}$-valued random variables with $P(X_i=1)=p_i$, $\lambda=\sum_{i\in \J} p_i$, and let $W= \sum_{i \in \J} X_i$. Let $I$ be independent of $\mathbb{X}$ with $P(I=i)=p_i/\lambda$. Given $i\in \J$, construct $\mathbb{X}^i=\{X_j^i\}_{j \in \J}$ on the same probability space as $\mathbb{X}$ such that
\begin{equation*}
\mathcal{L}(X_j^i: j\in \J)=\mathcal{L}(X_j: j\in \J | X_i=1).
\end{equation*}
Then $W^s=\sum_{j\in \J} X_j^I$ has the $W$-size biased distribution.

\begin{theorem}\label{t2}
Let $W$ be a non-negative integer-valued random variable with $E W =\lambda>0$.
Let $W^s$ be defined on the same probability space as $W$ with a $W$-size biased distribution.
Assume that
$\Delta:=W+1-W^s \in \{-1, 0, 1\}$ and that there are non-negative constants $\delta_1, \delta_2$ such that
\begin{equation}
P(\Delta = - 1 \ | \ W) \leq \delta_1, \quad P(\Delta =  1 \ | \ W) \leq \delta_2 W.
\lbl{c-1}
\end{equation}
For integers $k\geq \lambda$, let $\xi=(k-\lambda)/\sqrt{\lambda}$.
Then there exist absolute positive constants $c, C$, such that for
$(\delta_1+\delta_2 \lambda)(1+\xi^2)\leq c$, we have
 \begin{equation}\label{t2-1}
\Bigl| \frac{P(W\geq k)}{P(Y\geq k)}-1 \Bigr| \leq C (\delta_1+\delta_2 \lambda)(1+\xi^2),
 \end{equation}
where $Y \sim Poi(\lambda)$.
\end{theorem}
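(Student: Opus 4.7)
The plan is to start from the same Stein identity used in the proof of Theorem~\ref{t1}, but to extract a ratio bound $P(W\geq k)/P(Y\geq k)$ instead of a total-variation bound, by exploiting both the three-valued nature of $\Delta$ and sharper pointwise estimates on the Stein solution. Let $h_k(w)=I(w\geq k)$ and let $f_k$ denote the bounded solution of the Stein equation \eq{cs} with $h=h_k$; write $g_k(j):=f_k(j+1)-f_k(j)$. Taking expectation in \eq{cs} at $W$ and substituting $EWf_k(W)=\lambda Ef_k(W^s)$ gives
\[
P(W\geq k)-P(Y\geq k)=\lambda E[f_k(W+1)-f_k(W^s)],
\]
and since $W^s=W+1-\Delta$ with $\Delta\in\{-1,0,1\}$, this splits into $\lambda E[g_k(W)I(\Delta=1)]-\lambda E[g_k(W+1)I(\Delta=-1)]$. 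Conditioning on $W$ and using the bounds \eq{c-1} yields
\[
|P(W\geq k)-P(Y\geq k)|\leq \lambda\delta_2 E[W|g_k(W)|]+\lambda\delta_1 E[|g_k(W+1)|].
\]

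The second step is to refine the crude bound \eq{t1-1} for $g_k$ to a pointwise estimate that tracks $j$ relative to $k$ and $\lambda$. From the explicit formula $f_k(w)=\frac{(w-1)!}{\lambda^w}\sum_{\ell=0}^{w-1}[h_k(\ell)-P(Y\geq k)]\lambda^\ell/\ell!$ one finds that, for $j\leq k$,
\[
|g_k(j)|\ \leq\ C\,\frac{P(Y\geq k)}{\lambda}\,\Bigl(1+\frac{P(Y\leq j-1)}{P(Y=j)}\cdot\Bigl|1-\frac{\lambda}{j}\Bigr|\Bigr),
\]
with a symmetric expression for $j>k$. Combined with Stirling-type asymptotics for the Poisson tail at $k=\lambda+\xi\sqrt\lambda$, this lets one bound $|g_k(j)|$ in the moderate-deviation window by $C\,P(Y\geq k)/\lambda$ multiplied by a factor that is at most polynomial in $|j-\lambda|/\sqrt\lambda$.

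The final and most delicate step is to integrate these pointwise estimates against $\cL(W)$. Since $E[W|g_k(W)|]$ is sensitive to the distribution of $W$ in the very range we are trying to estimate, I would close the argument by a bootstrap: first deduce a crude concentration bound $P(W\geq j)\leq C\,P(Y\geq j)$ for $j$ in the window from a single application of \eq{3} and Chernoff-type tail control, then plug this back into the displays above to show
\[
\lambda E[W|g_k(W)|]+\lambda E[|g_k(W+1)|]\leq C\,(\delta_1+\delta_2\lambda)\,P(Y\geq k)(1+\xi^2),
\]
iterating once if necessary to make the constant absolute. Dividing through by $P(Y\geq k)$ then produces the desired bound $C(\delta_1+\delta_2\lambda)(1+\xi^2)$; the smallness assumption $(\delta_1+\delta_2\lambda)(1+\xi^2)\leq c$ is precisely what is required to run the bootstrap.

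The main obstacle is this self-referential integration step. Both the concentration input and the pointwise estimates on $g_k$ must be combined in such a way that the quadratic factor $(1+\xi^2)$—rather than a cubic or exponential one—emerges with an absolute constant, and so that the bootstrap does not degrade as $\xi$ approaches the boundary of the allowed range. Getting this balance right, in particular the way $\xi^2$ arises from ratios of Poisson tails $P(Y\geq j)/P(Y\geq k)$ weighted by $|g_k(j)|$, is where the bulk of the technical work will go.
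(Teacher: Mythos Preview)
Your overall architecture matches the paper's: start from the Stein identity, use $W^s=W+1-\Delta$ with $\Delta\in\{-1,0,1\}$ to split into the two terms you call $\lambda\delta_2E[W|g_k(W)|]$ and $\lambda\delta_1E[|g_k(W+1)|]$ (these are exactly the paper's $H_2$ and $H_1$), and then control the increments $g_k(j)=f_k(j+1)-f_k(j)$ pointwise. The paper packages the pointwise control through the function $g_1(w)$ defined in \eq{up-3} and Lemmas~\ref{R1}--\ref{R3}, which give precisely the kind of estimate you sketch in your second step; so that part of your plan is sound and close to what is actually done.

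The gap is in your third step. Your proposed route to the ``crude concentration bound $P(W\geq j)\leq C\,P(Y\geq j)$'' via the total variation estimate \eq{3} plus ``Chernoff-type tail control'' will not work as stated. The bound \eq{3} is additive, of size roughly $\delta_1+\delta_2\lambda$, whereas $P(Y\geq j)$ in the moderate-deviation window is exponentially small in $\xi^2$; an additive error cannot be upgraded to a multiplicative one this way. And there is no independence or other structure in the hypotheses from which to extract a genuine Chernoff bound on $W$---all you have is a non-negative integer-valued variable with a size-bias coupling satisfying $|\Delta|\leq 1$. So the a~priori input you want to bootstrap from is not available.

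The paper closes the loop differently, and the device is worth learning. Define
\[
\eta_k:=\sup_{\lambda\leq r\leq k}\frac{P(W\geq r)}{P(Y\geq r)}.
\]
When you integrate the pointwise bounds on $g_k$ against $\cL(W)$, write $g(W\wedge k)=g(0)+\sum_{j=1}^k\bigl(g(j)-g(j-1)\bigr)I(W\geq j)$ for nondecreasing $g$, and replace each $P(W\geq j)$ by $(\eta_k+C)\,P(Y\geq j)$ (this is Lemma~\ref{R2}). The resulting expectations against $Y$ are computed explicitly (Lemma~\ref{R3}), and one obtains
\[
\Bigl|\frac{P(W\geq k)}{P(Y\geq k)}-1\Bigr|\leq C(\eta_k+1)(\delta_1+\delta_2\lambda)(1+\xi^2).
\]
Because the right-hand side is increasing in $k$, the same inequality holds with $k$ replaced by any $r\in[\lambda,k]$, so taking the supremum over $r$ on the left gives the single linear inequality
\[
\eta_k-1\leq C(\eta_k+1)(\delta_1+\delta_2\lambda)(1+\xi^2),
\]
which one solves directly under the smallness hypothesis. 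There is no separate concentration input and no iteration; the ``self-reference'' is resolved in one algebraic step. If you replace your third step with this $\eta_k$ argument, your proof goes through essentially as the paper's does.
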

The conditions of Theorem \ref{t2} do not hold for all size-bias couplings. Nevertheless, in Section 3, we are able to apply Theorem \ref{t2} to prove moderate deviation results for Poisson-binomial trials and the matching problem. It is possible to replace the upper bounds in \eq{c-1} by any polynomial function of $W$, resulting in a change of the upper bound in \eq{t2-1}. However we will not pursue this in this paper.

\par

\setcounter{chapter}{3}
\setcounter{equation}{0} 

\bigskip

\noindent {\bf 3. Applications}

In this section, we apply our main results to Poisson-binomial trials, 2-runs in a sequence of i.i.d. indicators and the matching problem.

\bigskip

\noindent{\bf 3.2. Poisson-binomial trials}

Let $X_i, i\in \J$, be independent with $P(X_i=1)=p_i=1-P(X_i=0)$. Set $\lambda=\sum_{i\in \J}p_i$ and $\tilde{p}=\sup_{i\in \J} p_i$. Let $W=\sum_{i\in \J} X_i$. Following the construction in Section 2.1, $W^s$ in \eq{2} can be constructed as $W^s=W-X_I+1$, where $I$ is independent of $\{X_i: i\in \J\}$ and $P(I=i)=p_i/\lambda$ for each $i\in \J$. Therefore, $\Delta=W+1-W^s=X_{I}$ and condition \eq{c-1} is satisfied with $\delta_1=0, \delta_2=\tilde{p}/\lambda$. Applying Theorem \ref{t2}, there exist absolute positive constants $c, C$ such that
 \begin{equation}\label{Pbt}
 \Bigl| \frac{P(W\geq k)}{P(Y\geq k)}-1 \Bigr| \leq C \p (1+\xi^2)
 \end{equation}
for integers $k\geq \lambda$ and $\tilde{p}(1+\xi^2) \leq c$ where $Y\sim Poi(\lambda)$ and $\xi=(k-\lambda)/\sqrt{\lambda}$. The range $\tilde{p}(1+\xi^2) \leq c$ is optimal for the i.i.d. case where $p_i=\tilde{p}$ for all $i\in \J$ (see Theorem 9.D of Barbour, Holst and Janson (1992, page 188) and Corollary 4.3 of Barbour, Chen and Choi (1995)).
\begin{remark}
{\rm The moderate deviation result \eq{Pbt} also follows from Theorem \ref{LDt} for sums of locally dependent random variables.}
\end{remark}

\bigskip

\noindent{\bf 3.1. $2$-runs.}

Let $\{\xi_1,\dots, \xi_n\}$ be i.i.d. $Bernoulli(p)$ variables with $n>10$, $p<1/2$. For each $i\in \{1,\dots, n\}$, let $X_i=\xi_i \xi_{i+1}$ where $\xi_{j+n}=\xi_{j-n}=\xi_j$ for any integer $j\in \{1,\dots, n\}$. Take $W=\sum_{i=1}^n X_i$ with mean $\lambda=np^2$.
Then $W$ is a sum of locally dependent random variables with $m=3$ where $m$ is defined in \eq{m}. For each $i\in \{1,\dots, n\}$ and any positive integer $w\leq cnp$ for some sufficiently small constant $c<1/50$ to be chosen later, we write
 \beq
 &&P(X_i=1, X_{i+1}=1, W=w)\\
 &=&\sum_{m_1\geq 0, m_2\geq 1 \atop m_1+m_2<w} P(X_{i-m_1}=\dots=X_{i+m_2}=1, X_{i-m_1-1}=X_{i+m_2+1}=0, W=w)\\
 &=:& \sum_{m_1\geq 0, m_2\geq 1 \atop m_1+m_2<w} a_{m_1, m_2}
 \eeq
where the sum is over integers.
By writing
 \beq
 a_{m_1, m_2}= p^{m_1+m_2+2}(1-p)^2 P(\sum_{i=1}^{n-(m_1+m_2+5)} X_i=w-(m_1+m_2+1)),
 \eeq
we have for $m_1+m_2+1<w$,
 \begin{equation}\label{2runs-1}
 \frac{a_{m_1, m_2+1}}{a_{m_1,m_2}}
 =p\frac{P(\sum_{i=1}^{n-(m_1+m_2+6)}X_i=w-(m_1+m_2+2) )}{P(\sum_{i=1}^{n-(m_1+m_2+5)}X_i=w-(m_1+m_2+1) )}\leq Cp\frac{w}{\lambda}
 \end{equation}
for some positive constant $C$. The last inequality is proved by observing that for each event
\beq
&\{X_i=x_i: 1\leq i\leq n-(m_1+m_2+6)\}\\
& \text{with}\quad  \sum_{i=1}^{n-(m_1+m_2+6)} x_i=w-(m_1+m_2+2),
\eeq
we can change one of the $\dots 000 \dots$ to $\dots 010 \dots$ and let $x_{n-(m_1+m_2+5)}=0$, thus resulting in an event
\beq
&\{X_i=x_i: 1\leq i\leq n-(m_1+m_2+5)\}\\
& \text{with}\quad  \sum_{i=1}^{n-(m_1+m_2+5)} x_i=w-(m_1+m_2+1),
\eeq
the probability of which is at least $c_1 p^2$ times the probability of the original event for an absolute positive constant $c_1$. Summing over the probabilities of all the events obtained in this way, and correcting for the multiple counts, yields the inequality in \eq{2runs-1}.
By choosing $c$ to be small,
 \beq
 \frac{a_{m_1, m_2+1}}{a_{m_1,m_2}}\leq \frac{1}{4}.
 \eeq
Similarly,
 \beq
 \frac{a_{m_1+1, m_2}}{a_{m_1,m_2}}\leq \frac{1}{4}.
 \eeq
Therefore,
 \beq
 P(X_i=1, X_{i+1}=1, W=w)\leq C a_{0,1} \leq Cp^3 P(\sum_{i=1}^{n-6} X_i=w-2).
 \eeq
Similar to \eq{2runs-1},
 \beq
 P(\sum_{i=1}^{n-6} X_i=w-2) \leq C (w^2/\lambda^2) P(W=w).
 \eeq
Therefore,
 \beq
 && \sum_{i=1}^n \sum_{j=i-1, i+1}E(X_i X_j|W=w)\\
 &=& 2n P(X_i=X_{i+1}=1,W=w)/P(W=w) \\
 &\leq& C n p^3 w^2/\lambda^2 =\frac{C}{np}w^2
 \eeq
for $w\leq cnp$ with sufficiently small $c$. Applying Theorem \ref{LDt}, there exist absolute positive constants $c, C$, such that for $k\geq \lambda$ and $p+p \xi^2+\xi^3/\sqrt{n} \leq c$, where $\xi=(k-\lambda)/\sqrt{\lambda}$,
 \beqn
 \Bigl| \frac{P(W\geq k)}{P(Y\geq k)}-1 \Bigr|\leq C(p+p \xi^2+\xi^3/\sqrt{n})\lbl{star},
 \eeqn
where $Y\sim Poi(\lambda)$. We remark that if $\lambda \asymp O(1)$, then the range of $\xi$ is of order $O(n^{1/6})$.
\begin{remark}
{\rm Although the rate $O(n^{1/6})$ may not be optimal, we have not seen a result like \eq{star} in the literature. Our argument for $2$-runs can be extended to study $k$-runs for $k\geq 3$.}
\end{remark}

\bigskip

\noindent{\bf 3.3. Matching problem}

For a positive integer $n$, let $\pi$ be a uniform random permutation of $\{1,\dots,n\}$. Let $W=\sum_{i=1}^n \delta_{i\pi(i)}$ be the number of fixed points in $\pi$. In Chatterjee, Diaconis and Meckes (2005), $W^s$ satisfying \eq{2} was constructed as follows. First pick $I$ uniformly from $\{1,\dots, n\}$, and then set
 \begin{equation*}
 \pi^s(j)=
 \begin{cases}
  I & \text{if}\  j=I \\
  \pi(I) & \text{if}\ j=\pi^{-1}(I) \\
  \pi(j) & \text{otherwise}.
 \end{cases}
 \end{equation*}
Take $W^s=\sum_{i=1}^n \delta_{i\pi^s(i)}$. With $\Delta=W+1-W^s$, we have
 \beq
 P(\Delta=1|W)=W/n,\quad P(\Delta=-1|W)=E(2a_2|W)/n\leq 2/n,
 \eeq
where $a_2$ is the number of transpositions of $\pi$, and the last inequality follows since
 \beq
 E(2a_2|W)=(n-W)/(n-W-1)\leq 2
 \eeq
for $n-W\geq 2$, and $E(2a_2|W)=0$ for $n-W\leq 1$. By Theorem \ref{t2} with $\lambda=1$, there exist absolute positive constants $c, C$ such that for all positive integers $k$ satisfying $k^2/n\leq c$,
 \beq
 \Bigl| \frac{P(W\geq k)}{P(Y\geq k)}-1 \Bigr| \leq Ck^2/n.
 \eeq
We remark that the order $O(1/n)$ is the same as that of the total variation bounds in Barbour, Holst and Janson (1992) and Chatterjee, Diaconis and Meckes (2005). As remarked in those papers, this order is not optimal; it is an open problem to prove the actual order $O(2^n/n!)$ using Stein's method.

\par

\setcounter{chapter}{4}
\setcounter{equation}{0} 

\bigskip

\noindent {\bf 4. Proofs}

We use $c, C$, to denote absolute positive constants whose values may be different at each appearance.

\begin{lemma}\label{lem}
For any integer $w\geq \lambda > 0$,
\begin{equation}\label{lem-1}
\sum_{j=0}^\infty \lambda^j { w! (j+1) \over (j+w+1)!} \leq C.
\end{equation}
\end{lemma}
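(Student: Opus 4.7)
The plan is to rewrite the sum in probabilistic language, where the Poisson recursion gives a clean closed form, and then use $w\ge\lambda$ to finish in one line.

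First, I would introduce $Y\sim Poi(\lambda)$ and notice that $\lambda^j/(j+w+1)!=e^{\lambda}\lambda^{-(w+1)}P(Y=w+j+1)$, which lets me rewrite
\[
\sum_{j=0}^\infty \lambda^j\,\frac{w!(j+1)}{(j+w+1)!}
= \frac{w!\,e^{\lambda}}{\lambda^{w+1}}\sum_{j=0}^\infty (j+1)P(Y=w+j+1)
= \frac{E[(Y-w)\mathbf{1}_{\{Y>w\}}]}{\lambda P(Y=w)},
\]
after reindexing $k=w+j+1$ and recognizing $\lambda P(Y=w)=e^{-\lambda}\lambda^{w+1}/w!$.

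Next, the main identity I need is a closed form for the numerator. Using the Poisson recursion $kP(Y=k)=\lambda P(Y=k-1)$,
\[
\sum_{k=w+1}^{\infty} kP(Y=k) = \lambda\sum_{k=w+1}^{\infty} P(Y=k-1) = \lambda P(Y\ge w),
\]
so
\[
E[(Y-w)\mathbf{1}_{\{Y>w\}}] = \lambda P(Y\ge w) - w P(Y\ge w+1) = \lambda P(Y=w) + (\lambda-w)P(Y\ge w+1).
\]

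Finally, since $w\ge\lambda$, the second term is nonpositive, so $E[(Y-w)\mathbf{1}_{\{Y>w\}}]\le \lambda P(Y=w)$. Dividing by $\lambda P(Y=w)>0$ yields the sum is bounded by $1$, so the lemma holds with $C=1$. There is no real obstacle here; the only thing to be careful about is the index shift and invoking $w\ge\lambda$ only at the very end so that the bound is uniform in both $w$ and $\lambda$.
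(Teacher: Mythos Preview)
Your proof is correct and in fact yields the sharp constant $C=1$, whereas the paper's proof gives only an unspecified absolute constant. The two arguments are genuinely different. The paper proceeds by purely elementary estimates: it replaces $\lambda^j$ by $w^j$ (using $w\ge\lambda$), writes $w!\,/(w+j+1)! = 1/\bigl((w+1)\cdots(w+j+1)\bigr)$, and bounds this denominator from below by $w^{j+1}$ for $j\le\sqrt w$ and by $cj^4 w^{j-1}$ for $j>\sqrt w$, so that the two pieces of the sum are each $O(1)$. Your route instead recognizes the sum as $E[(Y-w)\mathbf 1_{\{Y>w\}}]\big/\bigl(\lambda P(Y=w)\bigr)$ for $Y\sim \mathrm{Poi}(\lambda)$ and uses the Poisson recursion $kP(Y=k)=\lambda P(Y=k-1)$ to evaluate the numerator exactly as $\lambda P(Y=w)+(\lambda-w)P(Y\ge w+1)$, after which the hypothesis $w\ge\lambda$ discards the second term. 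Your argument is shorter, gives an explicit constant, and makes transparent exactly where $w\ge\lambda$ enters; the paper's approach has only the minor advantage of being self-contained combinatorics, which is of little value here since the surrounding section already works with Poisson identities.
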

\begin{proof} We first bound $\lambda^j$ by $w^j$. Next, by expanding the product $(w+j+1)\times \dots \times (w+1)$ in terms of $w$ and then bounding it below by $w^{j+1}$ and $cj^4 w^{j-1}$, respectively, in the expansion, we have
 \beq
 \sum_{j=0}^\infty \lambda^j { w! (j+1) \over (j+w+1)!}
 &\leq& \sum_{j=0}^\infty w^j { j+1 \over (w+j+1)\times \dots \times (w+1)} \\
 &\leq& \sum_{ j \leq \sqrt{w}} \frac{j+1}{w} +\sum_{j>\sqrt{w}} \frac{j+1}{cj^4/w}   \\
 &\leq& C,
 \eeq
 as desired.
\end{proof}

\begin{lemma}\label{lem0}
Let $Y\sim Poi(\lambda)$ with $\lambda>0$. Then we have
\begin{equation}\label{lem0-1}
P(Y\geq k) \geq c >0\  \text{for all integer}\  k< \lambda,
\end{equation}
\begin{equation}\label{lem0-2}
\frac{P(Y\geq k)}{P(Y\geq k-1)} \geq \frac{\lambda}{\lambda+k}\  \text{for all integer}\ k\geq 1,
\end{equation}
\begin{equation}\label{lem0-3}
P(Y\geq k) \leq P(Y=k) \frac{k+1}{k-\lambda+1}\  \text{for all integer}\ k>\lambda-1.
\end{equation}
\end{lemma}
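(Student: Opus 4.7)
My plan is to handle the three inequalities independently, all built on the Poisson mass recurrence $jP(Y=j)=\lambda P(Y=j-1)$.

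For \eqref{lem0-2}, I rewrite the target as $(\lambda+k)P(Y\ge k)\ge \lambda P(Y\ge k-1)$, subtract $\lambda P(Y\ge k)$ from both sides to obtain $kP(Y\ge k)\ge \lambda P(Y=k-1)$, and apply the recurrence to replace $\lambda P(Y=k-1)$ by $kP(Y=k)$; the claim then reduces to the trivial $P(Y\ge k)\ge P(Y=k)$. For \eqref{lem0-3}, I expand
\[
P(Y\ge k)=\sum_{j=0}^{\infty}P(Y=k+j)=P(Y=k)\sum_{j=0}^{\infty}\prod_{i=1}^{j}\frac{\lambda}{k+i}.
\]
Because $k$ is an integer with $k>\lambda-1$, one has $k+1>\lambda$, so every factor $\lambda/(k+i)$ is at most $\lambda/(k+1)<1$; the inner sum is then dominated by the geometric series $\sum_{j\ge 0}(\lambda/(k+1))^{j}=(k+1)/(k-\lambda+1)$, which is precisely the factor claimed in \eqref{lem0-3}.

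The delicate part is \eqref{lem0-1}, because the constant $c$ must be independent of $\lambda$. When $\lambda<1$ the only admissible $k$ is $0$ and $P(Y\ge 0)=1$, so I may assume $\lambda\ge 1$. For any integer $k<\lambda$ one has $k\le\lfloor\lambda\rfloor$, whence $P(Y\ge k)\ge P(Y\ge \lfloor\lambda\rfloor)$, and it therefore suffices to produce a uniform lower bound on $P(Y\ge \lfloor\lambda\rfloor)$. Writing $n=\lfloor\lambda\rfloor\ge 1$, Stirling together with $\lambda^{n}\ge n^{n}$ and $\lambda-n<1$ yields $P(Y=n)\ge c/\sqrt{n}$; the recurrence gives, for $0\le m\le \sqrt{n}$,
\[
\frac{P(Y=n+m)}{P(Y=n)}=\prod_{i=1}^{m}\frac{\lambda}{n+i}\ \ge\ \prod_{i=1}^{m}\frac{n}{n+i}\ \ge\ \frac{1}{e},
\]
the last inequality coming from $(n+m)!/n!\le (n+\sqrt{n})^{m}\le e\, n^{m}$ when $m\le\sqrt{n}$. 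Summing these $\Theta(\sqrt{n})$ comparable terms gives $P(Y\ge n)\ge c'>0$, a bound independent of $\lambda$.

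The principal obstacle is \eqref{lem0-1}: \eqref{lem0-2} and \eqref{lem0-3} collapse to the Poisson recurrence plus a single geometric-series estimate, whereas \eqref{lem0-1} genuinely requires the window-of-size-$\sqrt{\lambda}$ argument above in order to force the constant to be truly uniform in $\lambda$.
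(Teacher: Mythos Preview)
Your proof is correct. Parts \eqref{lem0-2} and \eqref{lem0-3} agree in spirit with the paper: for \eqref{lem0-3} both you and the paper bound the ratio series $\sum_{j\ge 0}\prod_{i=1}^{j}\lambda/(k+i)$ by a geometric series, and for \eqref{lem0-2} the paper keeps the first two terms of that same series to get $P(Y=k-1)/P(Y\ge k-1)\le k/(\lambda+k)$, whereas you reduce the inequality algebraically to $P(Y\ge k)\ge P(Y=k)$; your route is a touch more direct but the content is the same.

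The real difference is in \eqref{lem0-1}. The paper simply observes that the case of bounded $\lambda$ is trivial and, for large $\lambda$, appeals to the normal approximation for the Poisson to obtain the uniform lower bound. Your argument is fully elementary and self-contained: Stirling gives $P(Y=n)\ge c/\sqrt{n}$ for $n=\lfloor\lambda\rfloor$, and the window estimate $P(Y=n+m)\ge e^{-1}P(Y=n)$ for $0\le m\le\sqrt{n}$ then sums to a constant. This buys you independence from any outside approximation theorem, at the cost of the short Stirling/window computation; the paper's version is shorter to state but leans on a black box. Both are valid.
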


\begin{proof}
The inequality in \eq{lem0-1} is trivial when $\lambda<1$ or $1\leq \lambda \le C$ for some absolute constant $C$. When $\lambda>C$, we can use normal approximation to prove \eq{lem0-1}.

For \eq{lem0-2}, noting that
 \beq
 P(Y\geq k)&=&P(Y=k) (1+\frac{\lambda}{k+1}+\frac{\lambda^2}{(k+1)(k+2)}+\cdots)\\
 &\geq& \frac{\lambda+k+1}{k+1} P(Y=k),
 \eeq
we have
\beq
\frac{P(Y\geq k)}{P(Y\geq k-1)}=1-\frac{P(Y=k-1)}{P(Y\geq k-1)}\geq 1-\frac{k}{\lambda+k}=\frac{\lambda}{\lambda+k}.
\eeq

The inequality in \eq{lem0-3} follows by observing that
 \beq
 P(Y\geq k)&=&P(Y=k) (1+\frac{\lambda}{k+1}+\frac{\lambda^2}{(k+1)(k+2)}+\cdots)\\
 &\leq& P(Y=k) (1+  \frac{\lambda}{k+1}+\frac{\lambda^2}{(k+1)^2} + \cdots  ) \\
 & = & P(Y=k) \frac{k+1}{k-\lambda+1}.
 \eeq
\end{proof}

The bounded solution $f_h$ (unique except at $w=0$) to the Stein equation
\begin{equation}
 \lambda f(w+1) - w f(w) = h(w) - E h(Y), \lbl{cs1}
\end{equation}
where $Y\sim Poi(\lambda)$ and $h(w)=I\{w\geq k\}$ for fixed integer $k\geq \lambda>0$, is
\beq
f_h(w) &  = & - { e^\lambda (w-1)! \over \lambda^w} E( h(Y) - Eh(Y)) I\{Y \geq w\}\nn \\
& = &
\left\{
\begin{array}{ll}
- { e^\lambda (w-1)! \over \lambda^w} ( 1- P(Y \geq k))  P( Y \geq w), \ \  w \geq k, \\
 \vspace{-.3cm} \\
 - { e^\lambda (w-1)! \over \lambda^w} P(Y \geq k)  P( Y \leq w-1), \ \  0< w \leq k.
  \end{array} \right.
  \eeq
Although $f_h(0)$ does not enter into consideration, we set $f_h(0):=f_h(1)$.

For $w \geq k$,
\beq
{ f_h(w) - f_h(w+1) \over 1- P(Y \geq k)}   & =&
 { e^\lambda w! \over \lambda^{w+1}} P( Y \geq w+1) -
  { e^\lambda (w-1)! \over \lambda^w} P( Y \geq w)\nn \\
  & =& \sum_{j=w+1}^\infty { w ! \over j!} \lambda^{j-w-1}
  - \sum_{j=w}^\infty { (w-1) ! \over j!} \lambda^{j-w}  \nn \\
  & =& \sum_{j=0}^\infty \lambda^j ({ w! \over (j+w+1)!} - {(w-1)! \over (j+w)!}
 )\nn \\
 & =& - \sum_{j=0}^\infty \lambda^j { (w-1)! (j+1) \over (j+w+1)!},
 \eeq
 and hence by \eq{lem-1},
 \begin{equation}
 0 < f_h(w+1) - f_h(w) \leq { C \over w} \ \ \mbox{for} \ w \geq k .\lbl{up-1}
 \end{equation}

 For $0\leq w \leq k-1$,
 \begin{equation*}
 { f_h(w) - f_h(w+1) \over P(Y\geq k)}
  =g_1(w).
\end{equation*}
where
\begin{equation}
g_1(w)={ e^\lambda w! \over \lambda^{w+1}}  P( Y \leq w)
- { e^\lambda (w-1)! \over \lambda^{w}}  P( Y \leq w-1)\lbl{up-3}
\end{equation}
and $g_1(0):=0$.

Let $W$ be a non-negative integer-valued random variable with $E W=\lambda>0$,  and let $Y\sim Poi(\lambda)$. Define
 \begin{equation}\lbl{LDt-1}
 \eta_k:=\sup_{\lambda \leq r \leq k} \frac{P(W\geq r)}{P(Y\geq r)}.
 \end{equation}
By \eq{lem0-1},
 \begin{equation}\label{t2-1b}
 \sup_{0 \leq r \leq k} \frac{P(W\geq r)}{P(Y\geq r)} \leq \eta_k + C.
 \end{equation}

\begin{lemma}\label{R1}
The function $g_1$ is non-negative, non-decreasing and
\begin{equation}\label{r1}
g_1(w)\leq \frac{1}{\lambda}+\frac{(w-1)!(w-\lambda)_+}{\lambda^{w+1}}e^\lambda
\end{equation}
for all $w\geq 1$ where $x_+$ denotes the positive part of $x$.
\end{lemma}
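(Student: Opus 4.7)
The plan is to rewrite $g_1(w)$ in two useful forms: a closed form that yields the upper bound \eq{r1}, and a finite power-series form that yields non-negativity and monotonicity.

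To obtain the closed form, I would split $P(Y \le w) = P(Y \le w-1) + P(Y = w)$ in the definition \eq{up-3} and substitute $P(Y=w) = \lambda^w e^{-\lambda}/w!$. The two $P(Y\le w-1)$ contributions combine, the $P(Y=w)$ contribution collapses to $1/\lambda$ after cancellation of $e^{\lambda}$, and for $w \ge 1$ one obtains
\[
g_1(w) \;=\; \frac{1}{\lambda} \;+\; \frac{(w-\lambda)(w-1)!\,e^\lambda}{\lambda^{w+1}}\,P(Y \le w-1).
\]
The bound \eq{r1} now follows at once: if $w \ge \lambda$ then $(w-\lambda)_+ = w-\lambda$ and $P(Y\le w-1) \le 1$, so the right-hand side of \eq{r1} dominates term-by-term; if $w < \lambda$ the second term above is non-positive, so $g_1(w) \le 1/\lambda$, which matches \eq{r1} since $(w-\lambda)_+=0$.

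For non-negativity and monotonicity I would instead expand $P(Y \le w) = e^{-\lambda}\sum_{j=0}^{w}\lambda^j/j!$ directly in \eq{up-3}, cancel $e^\lambda$, and collect powers of $\lambda$. Writing $(m)_j := m(m-1)\cdots(m-j+1)$ for the falling factorial and using the one-line identity $(w)_j - (w-1)_j = j\,(w-1)_{j-1}$, the two finite sums telescope to
\[
g_1(w) \;=\; \frac{1}{\lambda^{2}} \sum_{j=0}^{w-1}(j+1)\,(w-1)(w-2)\cdots(w-j)\,\lambda^{-j}, \qquad w \ge 1,
\]
with the $j=0$ summand read as $1$. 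Every term is non-negative, whence $g_1(w) \ge 0$. Moreover, for each fixed $j$ the factor $(w-1)\cdots(w-j)$ is non-decreasing in $w$ and new non-negative terms enter as $w$ grows, so $g_1$ is non-decreasing on $\{1,2,\dots\}$; the convention $g_1(0):=0 \le 1/\lambda^{2} = g_1(1)$ extends the monotonicity across $w=0$.

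The only non-routine step is verifying the telescoping identity $(w)_j - (w-1)_j = j\,(w-1)_{j-1}$ and assembling the resulting terms into the clean series form; this is a short exercise in falling-factorial algebra. Everything else follows directly from the two displayed formulas, and I do not foresee any real obstacle beyond careful bookkeeping.
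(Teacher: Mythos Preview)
Your argument is correct. The proof of the upper bound \eq{r1} is essentially identical to the paper's: both split $P(Y\le w)=P(Y=w)+P(Y\le w-1)$ and read off the bound from the resulting closed form.

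For non-negativity and monotonicity, however, you take a genuinely different route. The paper invokes the incomplete-gamma identity $w!\,P(Y\le w)=\int_\lambda^\infty x^{w}e^{-x}\,dx$ and, after the change of variables $x\mapsto \lambda(1+t)$, obtains the integral representation
\[
g_1(w)=\int_0^\infty t(1+t)^{w-1}e^{-\lambda t}\,dt,
\]
from which both properties are immediate since the integrand is non-negative and $(1+t)^{w-1}$ is non-decreasing in $w$. Your approach instead expands the Poisson CDF as a finite sum and uses the falling-factorial telescoping $(w)_j-(w-1)_j=j\,(w-1)_{j-1}$ to arrive at the finite series
\[
g_1(w)=\lambda^{-2}\sum_{j=0}^{w-1}(j+1)(w-1)_j\,\lambda^{-j},
\]
which gives non-negativity and monotonicity term-by-term. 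The paper's integral form is slicker once one recalls the gamma identity and makes monotonicity obvious for all real $w\ge 1$; your combinatorial form is more elementary, avoids special-function identities, and exhibits $g_1$ explicitly as a polynomial in $1/\lambda$. Either argument is perfectly adequate here.
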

\begin{proof}
For $w\geq 1$, $g_1(w)$ can be expressed as
\begin{equation*}
 \begin{split}
&{ e^\lambda w! \over \lambda^{w+1}}  P( Y \leq w)
- { e^\lambda (w-1)! \over \lambda^{w}}  P( Y \leq w-1)  \\
 = &{ e^\lambda \over \lambda^{w+1}} \int_\lambda^\infty  x^{w} e^{-x} dx
-  { e^\lambda \over \lambda^{w}} \int_\lambda^\infty  x^{w-1} e^{-x} dx  \\
 = &e^\lambda \int_1^\infty x^{w-1} ( x-1)  e^{-\lambda x} dx  \\
 = &\int_0^\infty x (1+x)^{w-1}  e^{- \lambda x} dx,
\end{split}
\end{equation*}
from which $g_1$ is non-negative and non-decreasing. Also for $w\geq 1$,
\begin{equation*}
 \begin{split}
&{ e^\lambda w! \over \lambda^{w+1}}  P( Y \leq w)
- { e^\lambda (w-1)! \over \lambda^{w}}  P( Y \leq w-1)  \\
 =& \frac{e^\lambda w!}{\lambda^{w+1}} P(Y=w)+\big( \frac{e^\lambda w!}{\lambda^{w+1}}-{ e^\lambda (w-1)! \over \lambda^{w}} \big) P(Y\leq w-1) \\
 \leq &\frac{1}{\lambda}+\frac{(w-1)!(w-\lambda)_+}{\lambda^{w+1}}e^\lambda.
\end{split}
\end{equation*}
\end{proof}
\begin{lemma}\label{R2}
For any non-negative and non-decreasing function $g: \{0,1,2,\dots\}\rightarrow \mathbb{R}$ and any $k\geq 0$, we have
\begin{equation}\label{r2}
Eg(W\wedge k)\leq C(\eta_k+1)Eg(Y\wedge k).
\end{equation}
\end{lemma}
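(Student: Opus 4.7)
My plan is to reduce the inequality to a termwise comparison of tail probabilities via a layer-cake representation, and then invoke the uniform tail comparison \eq{t2-1b} that was already established.

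Since $g\colon\{0,1,2,\dots\}\to\mathbb{R}$ is non-negative and non-decreasing, for any non-negative integer-valued random variable $Z$,
\begin{equation*}
g(Z\wedge k) = g(0) + \sum_{r=1}^{k} \bigl(g(r)-g(r-1)\bigr)\, I\{Z\wedge k \geq r\}
= g(0) + \sum_{r=1}^{k} \bigl(g(r)-g(r-1)\bigr)\, I\{Z\geq r\},
\end{equation*}
where the second equality uses $r\leq k$. Taking expectations with $Z=W$ and $Z=Y$ gives
\begin{equation*}
E g(W\wedge k) = g(0) + \sum_{r=1}^{k}\bigl(g(r)-g(r-1)\bigr) P(W\geq r),\qquad
E g(Y\wedge k) = g(0) + \sum_{r=1}^{k}\bigl(g(r)-g(r-1)\bigr) P(Y\geq r).
\end{equation*}

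The key input is now \eq{t2-1b}, which states that $P(W\geq r)\leq (\eta_k+C)P(Y\geq r)$ uniformly for $0\leq r\leq k$; this is precisely where $\eta_k$ and the lower bound \eq{lem0-1} on $P(Y\geq r)$ for $r<\lambda$ enter. Since $g(r)-g(r-1)\geq 0$, a termwise comparison yields
\begin{equation*}
E g(W\wedge k) \leq g(0) + (\eta_k+C)\bigl(E g(Y\wedge k)-g(0)\bigr).
\end{equation*}

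To finish, I choose the absolute constant $C\geq 1$, so that $\eta_k+C\geq 1$ and hence $g(0)\bigl(1-(\eta_k+C)\bigr)\leq 0$ (using $g(0)\geq 0$). Therefore
\begin{equation*}
E g(W\wedge k) \leq (\eta_k+C)\, E g(Y\wedge k) \leq C'(\eta_k+1)\, E g(Y\wedge k),
\end{equation*}
which is \eq{r2}. There is no substantial obstacle in this argument: the whole proof is a layer-cake reduction plus the bound \eq{t2-1b}, which in turn has already been derived from \eq{lem0-1} and the definition \eq{LDt-1} of $\eta_k$.
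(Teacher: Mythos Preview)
Your proof is correct and follows essentially the same approach as the paper: both write $g(W\wedge k)$ as $g(0)$ plus a telescoping sum of increments times tail indicators, apply \eq{t2-1b} termwise, and absorb the $g(0)$ term using $C(\eta_k+1)\geq 1$. Your treatment of the $g(0)$ term is slightly more explicit, but the argument is identical in substance.
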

\begin{proof}
Write
\beq
g(W\wedge k) =g(0)+\sum_{j=1}^k (g(j)-g(j-1))I(W\geq j).
\eeq
From \eq{t2-1b} and the fact that $g$ is non-decreasing, we have
\beq
Eg(W\wedge k) &\leq & g(0)+C(\eta_k+1)\sum_{j=1}^k (g(j)-g(j-1))P(Y\geq j)\\
&=&C(\eta_k+1) Eg(Y\wedge k).
\eeq
\end{proof}
\begin{lemma}\label{R3}
For all $k\geq 0$, we have
\begin{equation}\label{r3-1}
E g_1((W+1)\wedge k)\leq C(\eta_k+1)\big( \frac{1}{\lambda}+\frac{(k+1-\lambda)_+^2}{\lambda^2} \big),
\end{equation}
\begin{equation}\label{r3-2}
E [ (W\wedge k)g_1(W\wedge k)]\leq C(\eta_k+1)\big( 1+\frac{(k-\lambda)_+^2}{\lambda} \big),
\end{equation}
\begin{equation}\label{r3-3}
E[(W\wedge k)^2 g_1(W\wedge k)]\leq C(\eta_k+1)\big( \lambda+(k-\lambda)_+^2+\frac{(k-\lambda)_+^3}{\lambda} \big).
\end{equation}
\end{lemma}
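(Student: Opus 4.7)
\smallskip

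\noindent \textbf{Proof proposal for Lemma \ref{R3}.} The plan is to prove all three bounds by the same two-step recipe: first use Lemma \ref{R2} to transfer the expectation from $W$ to $Y\sim Poi(\lambda)$, and then bound the resulting Poisson expectation by using the pointwise estimate from Lemma \ref{R1} and computing explicitly. For \eq{r3-1}, I would take $g(w)=g_1((w+1)\wedge k)$; for \eq{r3-2}, $g(w)=(w\wedge k)g_1(w\wedge k)$; for \eq{r3-3}, $g(w)=(w\wedge k)^2 g_1(w\wedge k)$. Each $g$ is non-negative (since $g_1\ge 0$) and non-decreasing (since $g_1$ is non-decreasing by Lemma \ref{R1}, and so are the polynomial factors), and one checks the identity $((w\wedge k)+1)\wedge k=(w+1)\wedge k$ in the first case. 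Hence Lemma \ref{R2} reduces each bound to estimating the corresponding $E g(Y\wedge k)$ up to the factor $C(\eta_k+1)$.

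Once the variable is Poisson, I would substitute the bound from Lemma \ref{R1}, $g_1(w)\le 1/\lambda + (w-1)!(w-\lambda)_+ e^\lambda/\lambda^{w+1}$ for $w\ge 1$ (and recall $g_1(0)=0$, so the tail-piece expression contributes nothing at $w=0$). This splits each Poisson expectation into a "leading" piece proportional to $1/\lambda$ and a "factorial" piece. The leading piece is controlled by elementary moment bounds, $E(Y\wedge k)\le EY=\lambda$ and $E(Y\wedge k)^2\le EY^2=\lambda^2+\lambda$, and produces the stand-alone terms $1/\lambda$, $1$, and $\lambda$ on the right-hand sides of \eq{r3-1}, \eq{r3-2}, \eq{r3-3} respectively.

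The factorial piece is where the payoff of working with $Y$ comes in. Splitting on $\{Y\le k-1\}$ vs.\ $\{Y\ge k\}$, the factorials in the Lemma \ref{R1} bound cancel perfectly against $P(Y=j)=e^{-\lambda}\lambda^j/j!$: on $\{Y=j\}$ with $j\le k-1$ the contribution to the three expectations is $(j-\lambda)_+/\lambda$, $(j-\lambda)_+/\lambda$ (times $j/j=1$ after the polynomial weight), and $j(j-\lambda)_+/\lambda$ respectively. These are then summed in closed form, bounded by the discrete analogue of $\int_\lambda^k x^a(x-\lambda)\,dx$ for $a=0,1,2$, which yields terms of orders $(k-\lambda)_+^2/\lambda^2$, $(k-\lambda)_+^2/\lambda$, and $(k-\lambda)_+^2+(k-\lambda)_+^3/\lambda$, matching the three targets. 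On $\{Y\ge k\}$, Lemma \ref{lem0}, equation \eq{lem0-3}, gives $P(Y\ge k)\le P(Y=k)\,(k+1)/(k-\lambda+1)$ for $k>\lambda-1$ (and the boundary contribution vanishes when $k\le\lambda$ because of the $(k-\lambda)_+$ factor), so the same factorial cancellation applies and produces a single boundary term of the form $k^a(k-\lambda)_+(k+1)/[\lambda(k-\lambda+1)]$ with $a=0,1,2$.

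The main obstacle I foresee is tidying the boundary term to fit into the advertised shape. For this I would use $k^a\le C(\lambda^a+(k-\lambda)_+^a)$ and the trivial bound $(k-\lambda)_+(k+1)/(k-\lambda+1)\le C(\lambda+(k-\lambda)_+)$, together with AM-GM inequalities such as $(k-\lambda)_+/\lambda\le 1/2+(k-\lambda)_+^2/(2\lambda^2)$, to absorb the boundary contribution into the constants and into the $(k-\lambda)_+^2/\lambda^{2-a}$ and $(k-\lambda)_+^3/\lambda$ terms already present from the interior sum. The rest is routine bookkeeping; no further probabilistic input is required beyond Lemmas \ref{lem0}, \ref{R1}, and \ref{R2}.
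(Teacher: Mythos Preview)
Your proposal is correct and follows essentially the same route as the paper: apply Lemma~\ref{R2} to pass from $W$ to $Y$, invoke the pointwise bound \eq{r1}, split into the $1/\lambda$ piece and the factorial piece, and then exploit the cancellation between $(w-1)!e^{\lambda}/\lambda^{w+1}$ and the Poisson mass function on $\{Y\le k-1\}$, handling the boundary $\{Y\ge k\}$ via \eq{lem0-3}. The only cosmetic difference is that the paper bounds the $1/\lambda$ leading piece crudely by $k^q/\lambda$ and absorbs it at the end using $(k-\lambda)_+\le C(\lambda+(k-\lambda)_+^2)$ (valid for integer $k\ge 1$), whereas you use Poisson moments directly; both are fine, modulo some minor arithmetic slips in your per-$j$ contributions for the first expectation.
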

\begin{proof}
The case $k=0$ is trivial. Let $k\geq 1$. For any $p\in \{0,1\}, q\geq 0$, by \eq{r2} and \eq{r1},
\beq
&&E\big[((W+p)\wedge k)^q g_1((W+p)\wedge k)  \big]\\
&\leq & C(\eta_k+1)
E\big[ ((Y+p)\wedge k)^q g_1((Y+p)\wedge k)   \big]\\
&\leq& C(\eta_k+1) \big( \frac{k^q}{\lambda}+A(k,p,q) +B(k,q) \big)
\eeq
where
\beq
A(k,p,q)=E\big[ \frac{(Y+p)^q(Y+p-1)!(Y+p-\lambda)_+}{\lambda^{Y+p+1}}e^\lambda I(1-p\leq Y\leq k-1) \big],
\eeq
\beq
B(k,q)=\frac{k^q (k-1)! (k-\lambda)_+}{\lambda^{k+1}}e^\lambda P(Y\geq k).
\eeq
Using \eq{lem0-3}, $B(k,q)$ is bounded by
\beq
B(k,q)\leq \frac{k^q}{\lambda}\frac{(k-\lambda)_+}{k}\frac{k+1}{k-\lambda+1}\leq \frac{k^q}{\lambda}.
\eeq
The relevant special cases of the quantities $A(k,p,q)$ are
\beq
A(k,1,0)=\sum_{w=0}^{k-1}\frac{(w+1-\lambda)_+}{\lambda^2}\leq \frac{(k+1-\lambda)_+^2}{2\lambda^2},
\eeq
\beq
A(k,0,1)=\sum_{w=1}^{k-1}\frac{(w-\lambda)_+}{\lambda}\leq \frac{(k-\lambda)_+^2}{2\lambda},
\eeq
\beq
A(k,0,2)&=&\sum_{w=1}^{k-1} \frac{w(w-\lambda)_+}{\lambda}=\sum_{w=1}^{k-1}\big[(w-\lambda)_+ +\frac{(w-\lambda)_+^2}{\lambda}  \big]\\
&\leq& \frac{(k-\lambda)_+^2}{2}+\frac{(k-\lambda)_+^3}{3\lambda}.
\eeq
Combining these bounds and observing that $(k-\lambda)_+\leq C(\lambda+(k-\lambda)_+^2)$
yields the desired result.
\end{proof}

We first prove of Theorem \ref{t2}, which is easier than Theorem \ref{LDt}.

\begin{proof}[Proof of Theorem \ref{t2}]

For fixed integer $k \geq \lambda $, let $h(w)= I\{w \geq k \}$.
Observe that by \eq{2}, for general $f$,
\begin{equation}
E(  \lambda f(W+1) - Wf(W))
 =  \lambda E(f(W+1) - f(W^s) ). \lbl{t2-2}
\end{equation}
In particular, for $f:=f_h$,
\begin{equation}\lbl{t2-3}
\begin{split}
 Eh(W) - Eh(Y)   = &   \lambda E(f(W+1) - f(W^s)) \\
  :=& H_1 + H_2
 \end{split}
 \end{equation}
 where
 \beq
 H_1 & =& \lambda E\big[ (f(W+1) - f(W+2))I\{ \Delta =-1\} \big], \\
 H_2 & =& \lambda  E \big[ (f(W+1) - f(W))I\{ \Delta =1\} \big] .
\eeq
Using \eq{c-1}, the definition of $\eta_k$ in \eq{LDt-1}, and the properties of $f_h$, $H_1$ is bounded by
 \beq
 |H_1| &\leq & \lambda \delta_1 E \big[ |f(W+1)-f(W+2)| (I(W+1\geq k)+ I(W+1\leq k-1)) \big]\nn \\
   &\leq& \lambda \delta_1 \frac{C P(W\geq k-1)}{k} +\lambda \delta_1 P(Y\geq k) E \big[ I(W+1\leq k-1) g_1(W+1)\big]\nn \\
   &\leq& \lambda \delta_1 \frac{C P(W\geq k-1)}{k} +\lambda \delta_1 P(Y\geq k) E  g_1((W+1)\wedge (k-1))\nn \\
    &\leq& C P(Y\geq k) \delta_1 (\eta_k+1) + CP(Y\geq k) \delta_1 (1+\frac{(k-\lambda)^2}{\lambda}) (\eta_k+1)
 \eeq
where we used \eq{t2-1b}, \eq{lem0-2} and \eq{r3-1}.

Similarly,
 \beq
 |H_2| &\leq & \lambda \delta_2 E \big[ W|f(W)-f(W+1)| (I(W\geq k) + I(W\leq k-1))\big] \\
           &\leq & C\lambda \delta_2  P(W\geq k) + \lambda \delta_2 P(Y\geq k) E \big[ I(W\leq k-1) Wg_1(W) \big] \\
           &\leq & C\lambda \delta_2  P(W\geq k) + \lambda \delta_2 P(Y\geq k) E \big[ (W\wedge (k-1))g_1(W\wedge (k-1)) \big] \\
           &\leq&  C P(Y\geq k)\lambda \delta_2 \eta_k + C P(Y\geq k) \delta_2 (\lambda+(k-\lambda)^2)(\eta_k+1).
 \eeq
by \eq{LDt-1} and \eq{r3-2}. Therefore, 
\beq
 |\frac{P(W\geq k)}{P(Y\geq k)}-1|&\leq& C(\eta_k+1) (\delta_1+ \delta_2 \lambda)(1 + \xi^2).
\eeq
Since the right-hand side here is increasing in $k$, we have 
\beq
 \eta_k-1 &\leq& C(\eta_k+1) (\delta_1+ \delta_2 \lambda)(1 + \xi^2).
\eeq
The bound in \eq{t2-1} is proved by solving this recursive inequality.
\end{proof}

\begin{proof}[Proof of Theorem \ref{LDt}]
From \eq{cs1} and the definition of the neighborhood $B_i$, we have
 \beq
    &&P(W\geq k) - P(Y\geq k)\\
  & = & \sum_{i\in \J} E X_i[ f(V_i+1)-f(W)]+\sum_{i\in \J} p_i E [f(W+1)-f(V_i+1)] \\
    & =: & H_3+ H_4,
 \eeq
where $V_i:=\sum_{j\notin B_i} X_j$.

We bound $H_4$ first. Write
$ \{X_k: k\in B_i\}= \{X_{ij}: 1\leq j\leq |B_i|\}$, where $|B_i|$ is the cardinality of $B_i$ and $X_{i, |B_i|}:=X_i$. Let
\beq
V_{ij}:=V_i+\sum_{l=1}^{j-1}X_{il}+1.
\eeq
From the definition, if $X_{ij}=1$, then $W\geq V_{ij}$.
By the definitions of $\p, m$ and the properties of $f$,
  \beq
 |H_4| &\leq & \sum_{i\in \J}
  p_i E\big\{ \sum_{j=1}^{|B_i|}X_{ij} \bigl| f(V_{ij}) - f(V_{ij}+1)    \bigr|  \big[ I(V_{ij}\geq k) +I(V_{ij}\leq k-1) \big]\big\} \\
   &\leq& \tilde{p} E\Big\{ \sum_{i\in \J} \sum_{j=1}^{|B_i|} X_{ij} \big[ \frac{C I(V_{ij} \geq k)}{V_{ij}}  + P(Y\geq k) g_1(V_{ij}) I(V_{ij} \leq k-1)   \big] \Big\}\\
   &\leq& \tilde{p} E\Big\{\sum_{i\in \J} \sum_{j=1}^{|B_i|} X_{ij} \big[ \frac{Cm I(W\geq k)}{W}\\
   &&\kern8em+ P(Y\geq k)g_1(W\wedge (k-1)) I(W\leq k+m) \big]  \Big\} \\
   &\leq& m\tilde{p} \Big\{ Cm P(W\geq k)\\
   &&\qquad+ P(Y\geq k) E\big[W I(k\leq W\leq k+m)g_1(k-1) \big] \\
   &&\qquad+ P(Y\geq k) E\big[ (W\wedge (k-1)) g_1(W\wedge(k-1)) \big] \Big\}.
 \eeq
By \eq{LDt-1}, \eq{r1}, \eq{lem0-3} and \eq{r3-2},
 \beq
 |H_4|\leq C P(Y\geq k)m^2 \tilde{p}  (\eta_k+1) \big[ 1 + \frac{(k-\lambda)^2}{\lambda}  \big].
 \eeq
Let $c_1 \geq 1$ be an absolute constant to be chosen later such that $c_1 k m < \theta$. We have
  \beq
  |H_3| &\leq& \sum_{i\in \J} E \Big\{ X_i \sum_{j=1}^{|B_i|-1} X_{ij} \bigl| f(V_{ij}) - f(V_{ij}+1)    \bigr| \\
  &&  \times \big[ I(W\leq c_1 km) +I(c_1 k m<W \leq \theta) +I (W>\theta)   \big] \Big\} \\
  &=:& H_{3,1} + H_{3,2} +H_{3,3}.
  \eeq
By \eq{delta}, $H_{3,1}$ can be bounded similarly as for $|H_4|$ as
 \beq
 H_{3,1}
 &\leq& \sum_{i\in \J} E\Big\{ X_i \sum_{j=1}^{|B_i|-1} X_{ij} \big[ \frac{CI(V_{ij} \geq k)}{V_{ij}} I(W\leq c_1 k m) \\
 &&\kern9em+ P(Y\geq k) g_1(V_{ij}) I(V_{ij} \leq k-1)  \big]  \Big\} \\
 &\leq& \sum_{i\in \J} E \Big\{ X_i \sum_{j=1}^{|B_i|-1} X_{ij} \big[\frac{C m I(W\geq k)}{W} I(W\leq c_1 k m) \\
 &&\kern9em+P(Y\geq k) g_1(W\wedge(k-1)) I(W\leq k+m)  \big]   \Big\} \\
 &\leq& Cm \delta E \big[ W  I(k\leq W \leq c_1 k m)\big] \\
 &&+ \delta P(Y\geq k) E\big[ W^2 I(k\leq W\leq k+m) g_1(k-1)\big] \\
 &&+\delta P(Y\geq k) E \big[ W^2 I(1\leq W \leq k-1) g_1(W)\big]\\
 &\leq& C P(Y\geq k) (\eta_k+1) \delta m^2 (\lambda+(k-\lambda)^2+\frac{(k-\lambda)^3}{\lambda})
 \eeq
 where we used \eq{r3-3} in the last inequality.
 Similarly,
 \beq
 H_{3,2}&\leq& C m\delta E W I(c_1 km<W\leq \theta) \\
 &&+C P(Y\geq k) (\eta_k+1) \delta m^2 (\lambda+(k-\lambda)^2+\frac{(k-\lambda)^3}{\lambda}).
 \eeq
From \eq{l3.1a} of Lemma \ref{l3.1}, proved later, there exists an absolute positive constant $C$ such that for $c_1> C$ and $k< \theta/Cm$,
 \beq
 Cm\delta E W I(c_1 km<W\leq \theta)&\leq& Cm^2\delta E [WI(W>c_1km)]\\
 &\leq & Cm^2\delta P(Y\geq k).
 \eeq
By \eq{l3.1a} and the upper bound $|f(w)-f(w+1)|\leq 1\wedge \frac{1}{\lambda}$ for all integers $w\geq 1$ (see, for example, Barbour, Holst and Janson (1992)),
 \beq
 H_{3,3}
 &\leq& P(Y\geq k) (1\wedge \frac{1}{\lambda})m^2\exp(- \frac{c \theta}{m}).
 \eeq
Therefore,
 \beq
 |H_3|&\leq& CP(Y\geq k)(\eta_k+1) \delta m^2 (\lambda+(k-\lambda)^2 + \frac{(k-\lambda)^3}{\lambda})\\
 &&+P(Y\geq k)(1\wedge \frac{1}{\lambda})m^2 \exp(- \frac{c \theta}{m}).
 \eeq
From the bounds on $|H_3|$ and $|H_4|$, we have
 \beq
 |\frac{P(W\geq k)}{P(Y\geq k)}-1|&\leq& C(\eta_k+1) m^2 \Big\{ \frac{\tilde{p}}{\lambda} (\lambda+(k-\lambda)^2) +\delta (\lambda+ (k-\lambda)^2 + \frac{(k-\lambda)^3}{\lambda})  \Big\} \\
 && +(1\wedge \frac{1}{\lambda}) m^2 \exp(-C \theta).
 \eeq
Since the right-hand side of this bound is increasing in $k$, we have
 \beq
 \eta_k-1&\leq& C(\eta_k+1) m^2 \Big\{ \frac{\tilde{p}}{\lambda} (\lambda+(k-\lambda)^2) +\delta (\lambda+ (k-\lambda)^2 + \frac{(k-\lambda)^3}{\lambda})  \Big\} \\
 && +(1\wedge \frac{1}{\lambda}) m^2 \exp(- \frac{c \theta}{m}).
 \eeq
Solving the above inequality yields Theorem \ref{LDt}.
\end{proof}
For the next lemma, we need a Bennett-Hoeffding inequality.
Let $\{\xi_i, 1 \leq i \leq n\}$ be independent random variables. Assume that
$E\xi_i \leq 0$, $\xi_i \leq a ( a>0)$ for each $ 1 \leq i \leq n$, and
$\sum_{i=1}^n E\xi_i^2 \leq B_n^2$. Then for $ x>0$
$$
P(\sum_{i=1}^n \xi_i \geq x)
\leq \exp( - { B_n^2 \over a^2}
\{ ( 1+ { a x \over B_n^2}) \log ( 1+ { a x \over B_n^2}) - { a x \over B_n^2}\})
$$
In particular, for $x > 4 B_n^2 /a$
\begin{equation}
P(\sum_{i=1}^n \xi_i \geq x)
\leq \exp( - { x \over 2a} \log ( 1+ { a x \over B_n^2}) )
\lbl{Benn-2}
\end{equation}

\begin{lemma} \lbl{l3.1}
Let $W$ be defined as in Theorem \ref{LDt}. Then there exists an absolute constant $C$ such that for $\theta>C k m$, we have
 \begin{equation}\label{l3.1a}
 E W I(W>x)\leq C m \exp(-\frac{x}{8m}\log(1+\frac{x}{2m\lambda})).
 \end{equation}
\end{lemma}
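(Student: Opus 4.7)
The plan is to decompose $W$ into a short sum of independent indicator sums by colouring the index set, apply the Bennett--Hoeffding inequality \eq{Benn-2} to each summand, and finally pass from a tail bound on $W$ to a bound on $E W I(W>x)$ via Chernoff's method.

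First, define a graph on $\J$ by putting $i\sim j$ iff $j\in B_i$ or $i\in B_j$. The condition \eq{m} forces every vertex to have degree at most $2(m-1)$, so greedy colouring partitions $\J$ into $K\leq 2m$ classes $C_1,\dots,C_K$ with the property that for every $i\in C_k$, $C_k\setminus\{i\}\subset\J\setminus B_i$. Because $X_i$ is independent of the whole vector $\{X_j:j\notin B_i\}$, an easy induction on $|C_k|$ shows that the $\{X_i:i\in C_k\}$ are mutually independent. Set $W_k=\sum_{i\in C_k}X_i$ and $\lambda_k=EW_k$, so $W=\sum_{k=1}^K W_k$, $\sum_k\lambda_k=\lambda$, and each $W_k$ is a sum of independent Bernoullis with mean at most $\lambda$.

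Next, apply \eq{Benn-2} to $\xi_i=X_i-p_i$ for $i\in C_k$: one has $|\xi_i|\leq 1$ and $\sum E\xi_i^2\leq\lambda_k\leq\lambda$. Together with the union bound $P(W\geq s)\leq K\max_k P(W_k\geq s/K)$, $K\leq 2m$, and a routine simplification that replaces $\lambda_k$ by $\lambda$ in the logarithm, this yields a tail estimate of the shape
\[
P(W\geq s)\leq 2m\exp\!\Big(-\tfrac{s}{cm}\log\!\big(1+\tfrac{s}{cm\lambda}\big)\Big)
\]
valid in the regime ensured by the hypothesis $\theta>Ckm$ (so in particular $s\gg m\lambda$).

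To convert this tail bound into the stated bound on $E W I(W>x)$, I would use Chernoff's method. By H\"older across the colour classes, $E e^{tW}\leq\prod_k (E e^{KtW_k})^{1/K}\leq\exp\!\big(\tfrac{\lambda}{K}(e^{Kt}-1)\big)$, and a similar computation bounds $E W e^{tW}$; then $E W I(W>x)\leq e^{-tx}\,E W e^{tW}$ is optimised over $t>0$. Equivalently, one can substitute the tail bound into the identity $E W I(W>x)=xP(W>x)+\int_x^\infty P(W>t)\,dt$ and integrate. Either route yields the claimed bound.

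The principal obstacle lies in this last step: both approaches produce intermediate prefactors proportional to $\lambda$ (from $EW=\lambda$) or to $x$ (from the $xP(W>x)$ term) rather than the clean $m$ appearing in the lemma. The hypothesis $\theta>Ckm$ together with $k\geq\lambda$ forces $x\gg m\lambda$ in the regime of interest, so the super-polynomial decay of $\exp(-\tfrac{x}{8m}\log(1+\tfrac{x}{2m\lambda}))$ can absorb these extra factors into a mild loss of constants inside the exponent. Keeping those constants consistent while landing at the exact form stated in the lemma is the principal chore.
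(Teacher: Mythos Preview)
Your strategy---partition $\J$ into $O(m)$ independent colour classes and apply the Bennett--Hoeffding inequality \eq{Benn-2} to each---is exactly the route the paper takes, following Shao and Zhou (2012). The paper claims $m$ colours rather than your $2m$, but this only shifts absolute constants.

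The one place where the paper is cleaner is precisely the step you flag as the ``principal obstacle.'' Instead of going through a union bound $P(W>s)\le K\max_l P(W_l>s/K)$ and then fighting with the resulting $x$- or $\lambda$-prefactor, the paper works at the level of positive parts. From
\[
E W I(W>2ym)=2ym\,P(W>2ym)+2m\int_y^\infty P(W>2tm)\,dt
\]
it bounds $2ym\,P(W>2ym)\le 2E(W-ym)_+$ and $P(W>2tm)\le\frac{1}{tm}E(W-tm)_+$, and then uses the elementary subadditivity
\[
(W-tm)_+=\Bigl(\sum_l(W_l-t)\Bigr)_+\le\sum_l(W_l-t)_+ .
\]
This passes to the colour classes \emph{after} forming the positive part, so each $E(W_l-t)_+=\int_t^\infty P(W_l>s)\,ds$ can be bounded directly by Bennett--Hoeffding with no spurious factor of $x$ or $\lambda$ in front; the final prefactor $m$ simply counts the colour classes. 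Your Chernoff/H\"older route and your direct-integration route both work in principle (the super-polynomial decay does absorb the polynomial prefactors once $x\ge c m$, which is guaranteed in the application since $k\ge1$), but the subadditivity trick is what lets the paper land cleanly on the constants $8$ and $2$ without the bookkeeping you anticipate.
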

\begin{proof}
We follow the proof of Lemma 8.2 in Shao and Zhou (2012).
Separate $\J$ into $\J_l, 1 \leq l \leq m$, such that for each $l$,
$X_i, i \in \J_l$ are independent. This can be done by coloring $\{X_i: i\in \J\}$ one by one, and in step $j$ we color $X_j$ such that it is independent of those $\{X_i: i<j\}$ with the same color. The total number of colors used can be controlled by $m$ because of \eq{m}. Write
$W_{l} =\sum_{i \in \J_l} X_i$. Then for $y>0$,
\beq
EW I( W > 2 y m )  & =& 2y m  P(W >2 y m   )
+ 2 m  \int_{y }^\infty P(W > 2 t m ) dt \nn \\
& \leq & 2 E (W -y m)^{+}
+ 2  \int_{y}^{\infty}  { 1 \over t} E(W-tm )_+ dt \nn \\
& \leq & 2 \sum_{1 \leq l \leq m} E(W_l - y )_+
+ 2 \sum_{1 \leq l \leq m} \int_{y}^\infty { 1 \over t}  E(W_l - t )_+ dt
\eeq
\ignore{
\beq
P(W > 2t) & \leq & { 1 \over t} E(W-t)_+ \nn \\
& \leq & { 1 \over t} \sum_{1 \leq l \leq m} E( W_l - t_l)_+ \nn
\eeq
}
For $s >   5 \lambda_l:=5 \sum_{i\in \J_l} p_i $, by \eq{Benn-2},
\beq
 P(W_l > s)
& \leq & \exp( - { s \over 4} \log ( 1+ { s \over \lambda_l}) ).
\eeq

For $t\geq y  > 5 \lambda_l$,
\beq
E(W_l - t )_+ & =  & \int_{t }^\infty P( W_l > s) ds \nn \\
& \leq & \int_{ t }^\infty
\exp( - { s \over 4} \log ( 1+ { s \over \lambda_l}) ) ds \nn \\
& \leq &  4  \exp(- { t \over 4} \log( 1+ t/\lambda_l)),
\eeq
\beq
\int_{y}^\infty { 1 \over t}  E(W_l - t )_+ dt
& \leq &  4  \int_{y}^\infty { 1 \over  t } \exp(- { t  \over 4} \log( 1+ t/\lambda_l)) dt \nn \\
& \leq & { 16 \over  y } \exp( - {y  \over 4} \log (1+y/\lambda_l)) .
\eeq
Combining these inequalities yields
\begin{equation}
EW I( W > 2y m ) \leq 8 m \exp( - { y \over 4} \log ( 1+ y/\lambda)) ( 1+ 4/y) .
  \lbl{h22-1}
\end{equation}
\end{proof}

\par

\bigskip

\noindent {\large\bf Acknowledgments.}
Louis Chen and Xiao Fang were partially supported by Grant C-389-000-010-101 at the National University of Singapore. Part of the revision was done when Xiao Fang was visiting Stanford University supported by NUS-Overseas Postdoctoral Fellowship from the National University of Singapore. Qi-Man Shao was partially supported by Hong Kong RGC GRF-602608, 603710 and CUHK2130344. The authors thank
    two referees for their valuable comments and suggestions that significantly improved the exposition of the paper.

\par

\bigskip

\noindent{\large\bf References}
\begin{description}
\item
Arratia, R., Goldstein, L. and Gordon, L. (1989).
Two moments suffice for Poisson approximations: the Chen-Stein method.
{\it Ann. Probab.} {\bf 17}, 9-25.
\item
Arratia, R., Goldstein, L. and Gordon, L. (1990).
Poisson approximation and the Chen-Stein method. With comments and a rejoinder by the authors.
{\it Statist. Sci.} {\bf 5}, 403-434.
\item
Baldi, P., Rinott, Y. and Stein, C. (1989).
A normal approximation for the number of local maxima of a random function on a graph.
{\it Probability, Statistics, and Mathematics}, 59--81. Academic Press, Boston, MA.
\item
Barbour, A. D. (1982).
Poisson convergence and random graphs.
{\it Math. Proc. Cambridge Philos. Soc.} {\bf 92}, 349-359.
\item
Barbour, A. D., Chen, L. H. Y. and Choi, K. P. (1995).
Poisson approximation for unbounded functions, I: Independent summands.
{\it Statist. Sinica} {\bf 2}, 749-766.
\item
Barbour, A. D., Holst, L. and Janson, S. (1992).
{\it Poisson Approximation}. Oxford Science Publications, Oxford.
\item
Chatterjee, S., Diaconis, P. and Meckes, E. (2005).
Exchangeable pairs and Poisson approximation.
{\it Probab. Surv.} {\bf 2}, 64-106.
\item
Chen, L. H. Y. (1975).
Poisson approximation for dependent trials.
{\it Ann. Probab.} {\bf 3}, 534-545.
\item
Chen, L. H. Y. and Choi, K. P. (1992).
Some asymptotic and large deviation results in Poisson approximation.
{\it Ann. Probab.} {\bf 20}, 1867-1876.
\item
Goldstein, L. and Rinott, Y. (1996).
Multivariate normal approximations by Stein's method and size bias couplings.
{\it J. Appl. Probab.} {\bf 33}, 1-17.
\item
Shao, Q. M. and Zhou, W. X. (2012).
Cram\'er type moderate deviation theorems for self-normalized processes.
{\it Preprint.}
\item
Stein, C. (1986).
{\it Approximate Computation of Expectations.}
Institute of Mathematical Statistics, Hayward, CA,
\end{description}

\vskip .65cm
\noindent
Department of Mathematics,
National University of Singapore,
10 Lower Kent Ridge Road,
Singapore 119076,
Republic of Singapore.
\vskip 2pt
\noindent
E-mail: (matchyl@nus.edu.sg)
\vskip 2pt
\noindent
Department of Statistics and Applied Probability,
National University of Singapore,
6 Science Drive 2,
Singapore 117546,
Republic of Singapore,

\noindent
and Department of Statistics,
Sequoia Hall, 390 Serra Mall, Stanford University, Stanford, CA 94305-4065, USA.
\vskip 2pt
\noindent
E-mail: (stafx@nus.edu.sg)
\vskip 2pt
\noindent
Department of Statistics,
The Chinese University of Hong Kong, Shatin, N.T., Hong Kong,
P.R. China.
\vskip 2pt
\noindent
E-mail: (qmshao@cuhk.edu.hk)
\vskip .3cm
\end{document}